\newtheorem{theor}{Theorem}[section]
\newtheorem{lemma}[theor]{Lemma}
\newtheorem*{rem*}{Remark}
\newcommand{\R}{\mathbb{R}^d}
\begin{document}
\footnotetext{
\emph{2010 Mathematics Subject Classification:} Primary 47G40; Secondary 31C15. 
\\
\emph{Key words and phrases:}   
harmonic oscillator, potential operator, potential kernel, $L^p-L^q$ estimate.\\
Research of both authors supported by MNiSW Grant N N201 417839.
}

\title[Sharp estimates of the potential kernel]
	{Sharp estimates of the potential kernel for the harmonic oscillator with applications}


\author[A. Nowak]{Adam Nowak}
\address{Adam Nowak, \newline
			Instytut Matematyczny,
      Polska Akademia Nauk, \newline
      \'Sniadeckich 8,
      00--956 Warszawa, Poland
      }
\email{Adam.Nowak@impan.pl}

\author[K. Stempak]{Krzysztof Stempak}
\address{Krzysztof Stempak,     \newline
      Instytut Matematyki i Informatyki,
      Politechnika Wroc\l{}awska,       \newline
      Wyb{.} Wyspia\'nskiego 27,
      50--370 Wroc\l{}aw, Poland,       \newline
      \indent and \newline
      Katedra Matematyki i Zastosowa\'n Informatyki,
      Politechnika Opolska,       \newline
      Miko\l{}ajczyka 5,
      45--271 Opole, Poland}
\email{Krzysztof.Stempak@pwr.wroc.pl}

\begin{abstract}
We prove qualitatively sharp estimates of the potential kernel for the harmonic oscillator.
These bounds are then used to show that the $L^p-L^q$ estimates of the associated potential
operator obtained recently by Bongioanni and Torrea \cite{BT} are in fact sharp.
\end{abstract}

\maketitle

\section{Introduction} \label{sec:intro}
The study of the potential theory for the $d$-dimensional harmonic oscillator 
$$
\mathcal{H}=-\Delta+\|x\|^2,
$$
has recently been initiated by Bongioanni and Torrea \cite{BT}. 
The multi-dimensional Hermite functions $h_k$ are eigenfunctions of $\mathcal{H}$ and we have
$\mathcal{H} h_k = (2|k|+d) h_k$. The operator $\mathcal{H}$ has a natural self-adjoint
extension, here still denoted by $\mathcal{H}$, whose spectral decomposition is given by the $h_k$.

The integral kernel $G_t(x,y)$ of the Hermite semigroup
$\{\exp({-t\mathcal{H}}): t>0\}$ is known explicitly to be
(see \cite{ST2} for this symmetric variant of the formula)
\begin{align*}
G_t(x,y)&=\sum_{n=0}^\infty e^{-(2n+d)t}\sum_{|k|=n}h_k(x)h_k(y)\\
&=\big(2\pi\sinh(2t)\big)^{-d/2}\exp\bigg(-\frac14\Big[\tanh( t)\|x+y\|^2+\coth(t)\|x-y\|^2 \Big]\bigg).
\end{align*}

Given $\sigma>0$, consider the negative power $\mathcal{H}^{-\sigma}$, which is a contraction on $L^2(\R)$.
It is easily seen that $\mathcal{H}^{-\sigma}$ coincides in $L^2(\R)$ with the \textit{potential operator}
\begin{equation}\label{int}
\mathcal{I}^\sigma f(x)=\int_{\R}\mathcal{K}^{\sigma}(x,y)f(y)\,dy,
\end{equation}
where the \textit{potential kernel} is given by
\begin{equation}\label{ker}
\mathcal{K}^{\sigma}(x,y)= \frac1{\Gamma(\sigma)}\int_0^\infty G_t(x,y) t^{\sigma-1}\,dt.
\end{equation}
Note that all the spaces $L^p(\mathbb R^d)$, $1\le p\le\infty$, are contained in
the natural domain of $\mathcal{I}^\sigma$ consisting of those functions $f$ for which the 
integral in \eqref{int} converges $x$-a.e., see \cite[Section 2]{NS3}. 

The main result of the paper, Theorem \ref{main1} below, provides qualitatively sharp estimates
of the potential kernel \eqref{ker}. As an application of this result, we prove
sharpness of the $L^p-L^q$ estimates for the potential
operator \eqref{int} obtained recently by Bongioanni and Torrea \cite[Theorem 8]{BT},
see Theorem~\ref{thm:LpLq}.

Recall that an operator $T$ defined on $L^p(\mathbb R^d)$ for some $1\le p\le\infty$, 
with values in the space of measurable functions on $\mathbb{R}^d$, is said to be 
of weak type $(p,q)$, $1\le q<\infty$, provided that
\begin{equation}\label{weak}
|\{x\in \mathbb R^d\colon |Tf(x)|>\lambda\}|\le C \Big(\|f\|_p\slash \lambda\Big)^q,
\end{equation}
with $C>0$ independent of $f\in L^p(\mathbb R^d)$ and $\lambda>0$. 
The restricted weak type $(p,q)$ of $T$ means that \eqref{weak} holds for $f=\chi_E$,
where $E$ is any measurable subset of $\mathbb R^d$ of finite measure.
By definition, weak type $(p,\infty)$ coincides with strong type $(p,\infty)$, 
{i.e{.}} the estimate $\|Tf\|_\infty\le C\|f\|_p$, $f\in L^p(\mathbb R^d)$. 
In terms of Lorentz spaces, the weak type $(p,q)$ is equivalent to the boundedness from
$L^p(\mathbb{R}^d)$ to $L^{q,\infty}(\mathbb{R}^d)$, and the restricted weak type $(p,q)$ is
characterized by the boundedness from $L^{p,1}(\mathbb{R}^d)$ to $L^{q,\infty}(\mathbb{R}^d)$,
see \cite[Chapter 4, Section 4]{BS}. Strong type $(p,q)$ means of course the $L^p$-$L^q$ boundedness.

The notation $X\lesssim Y$ will be used to indicate that $X\leq CY$ with a positive constant $C$
independent of significant quantities; we shall write $X \simeq Y$ when simultaneously 
$X \lesssim Y$ and $Y \lesssim X$.  We will also use the notation $X\simeq\simeq  Y\exp(-cZ)$ 
to indicate that there exist positive constants $C, c_1$ and $c_2$, independent of significant
quantities, such that
$$
C^{-1}\,Y\exp(-c_1Z)\le X\le C\,Y\exp(-c_2Z).
$$
Further, in a number of places, we will use natural and self-explanatory generalizations
of the $\simeq \simeq$ relation, for instance in connection with certain integrals 
involving exponential factors. In such cases the exact meaning will be clear from the context. 
By convention, $\simeq \simeq$ is understood as $\simeq$ whenever there are no exponential
factors involved.

We write $\log^+$ for the positive part of the logarithm, and $\vee,\wedge$ for the operations 
of taking maximum and minimum, respectively.

\section{Estimates of the potential kernel} \label{sec:asym}

We begin with two technical results describing the behavior of the integrals
\begin{align*}
 I_A(T) & = \int_T^{\infty}t^A\exp(-t)\,dt, \qquad T>0, \\
 J_A(T,S) & = \int_T^St^A\exp(-t)\,dt, \qquad 0<T<S<\infty.
\end{align*}
Notice that $I_A(T)$ dominates $J_A(T,S)$.
The lemma below is a refinement of \cite[Lemma 2.1]{NS3}, see also \cite[Lemma 1.1]{ST1}.
\begin{lemma}
\label{lem:le1}
Let $A\in\mathbb R$ and $\gamma > 0$ be fixed. Then
\begin{equation} \label{estea}
 I_A(\gamma T)\simeq T^A\exp(-\gamma T), \qquad T \ge 1,
\end{equation}
and for $0<T<1$
\begin{equation*}
 I_A(\gamma T)\simeq  
\begin{cases}
T^{A+1}, & \quad A<-1 \\
\log (2\slash T), & \quad A=-1 \\
1, & \quad A>-1
\end{cases} \;\;.
\end{equation*}
\end{lemma}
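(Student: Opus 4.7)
The plan is to handle the two regimes $T\ge 1$ and $0<T<1$ separately, in each case reducing $I_A$ to an explicit elementary expression. For $T\ge 1$ set $u=\gamma T\ge\gamma$ and change variables $t=u+s$ to write
$$
I_A(u) = e^{-u}\int_0^\infty (u+s)^A e^{-s}\,ds.
$$
I would then bound the remaining integral above and below by a multiple of $u^A$. For the upper bound, use $(u+s)^A\le 2^A(u^A+s^A)$ when $A\ge 0$ together with $u^A\ge\gamma^A$ to absorb the resulting $\Gamma(A+1)$ into a multiple of $u^A$; when $A<0$, monotonicity gives $(u+s)^A\le u^A$ directly. For the lower bound, restrict the integration to $s\in[0,1]$, on which $(u+s)^A\simeq u^A$ uniformly in $u\ge\gamma$. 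This yields $I_A(\gamma T)\simeq u^Ae^{-u}\simeq T^Ae^{-\gamma T}$, as required.

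For $0<T<1$, first observe that if $\gamma T\ge 1$ (which can happen only when $\gamma>1$), then $T$ is confined to the bounded interval $(1/\gamma,1)$, so the preceding argument already applies, and the three claimed right-hand sides $T^{A+1}$, $\log(2/T)$ and $1$ are all comparable to positive constants on this range; the assertion is then immediate.

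Otherwise $\gamma T<1$, and I would split
$$
I_A(\gamma T) = \int_{\gamma T}^1 t^A e^{-t}\,dt + \int_1^\infty t^A e^{-t}\,dt.
$$
The tail is a positive constant $C_A$ depending only on $A$. On $[\gamma T,1]$ the factor $e^{-t}$ lies in $[e^{-1},1]$, so the head is $\simeq \int_{\gamma T}^1 t^A\,dt$, which I would evaluate case by case. For $A>-1$, the head lies between $0$ and $(A+1)^{-1}$, so $I_A(\gamma T)\simeq 1$. For $A=-1$, the head equals $\log(1/(\gamma T))$; adding $C_A$ produces a quantity comparable to $\log(2/(\gamma T))\simeq\log(2/T)$, where the last $\simeq$ uses that the two sides differ by the fixed constant $\log\gamma$ and both are bounded below by a positive constant on $(0,1/\gamma)$. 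For $A<-1$, the head equals $((\gamma T)^{A+1}-1)/|A+1|$, and since $(\gamma T)^{A+1}\ge 1$ on this range, combining with $C_A$ yields a quantity comparable to $(\gamma T)^{A+1}\simeq T^{A+1}$.

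I do not expect any individual computation to be difficult; the main obstacle is the bookkeeping required at the boundary $\gamma T\approx 1$ between the two regimes, and in matching $\log(2/(\gamma T))$ with the claimed $\log(2/T)$ in the critical case $A=-1$. Both points are minor, but demand some care regarding uniformity in $T$ and the size of $\gamma$.
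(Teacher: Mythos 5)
Your proposal is correct, but it differs from the paper's proof in both regimes, and the difference is substantive enough to be worth noting. For $T\geq 1$, the paper reduces immediately to $\gamma=1$, obtains the lower bound by restricting to $[T,2T]$, and for the upper bound (needed only for $A>0$) uses a bootstrap: it chooses $T_A$ so that $\int_{2T}^{\infty}t^A e^{-t}\,dt \leq \tfrac12 I_A(T)$ for $T\geq T_A$, then absorbs the tail to conclude $I_A(T)\lesssim T^A e^{-T}$. Your substitution $t=u+s$, reducing matters to bounding $\int_0^\infty (u+s)^A e^{-s}\,ds\simeq u^A$ for $u\geq\gamma$, yields both bounds at once and avoids the self-referential step entirely; this is a cleaner, more elementary route. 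For $0<T<1$, the paper simply cites \cite[Lemma 2.1]{NS3}, whereas you give a self-contained head-plus-tail computation. You also keep $\gamma$ explicit throughout rather than invoking the reduction to $\gamma=1$, which costs some bookkeeping (the boundary $\gamma T\approx 1$ and matching $\log(2/(\gamma T))$ with $\log(2/T)$) but makes the uniformity in $\gamma$ visible; the paper leaves this reduction as an unproved remark. Both proofs are valid, the paper's being shorter because it delegates one half to a citation and elides the $\gamma$-dependence, yours being more explicit and self-contained.
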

\begin{proof}
We assume that $\gamma =1$. From the proof it will be clear that the estimates are true for any $\gamma >0$.
The case $0<T<1$ was treated in the proof of \cite[Lemma 2.1]{NS3}, so we consider $T \ge 1$
and focus on showing \eqref{estea}. The lower bound in \eqref{estea} is straightforward, we have
\begin{equation*}
I_A(T) > \int_T^{2T}t^Ae^{-t}\,dt\gtrsim T^A \int_T^{2T} e^{-t} \, dt =
T^A\big(e^{-T}-e^{-2T}\big)\gtrsim T^A e^{-T}, \qquad T \ge 1.
\end{equation*}
It remains to prove the upper bound,
\begin{equation}\label{aa}
\int_T^\infty t^Ae^{-t}\,dt\lesssim  T^A e^{-T}, \qquad T \ge 1,
\end{equation}
and here we assume that $A>0$, since for $A\le0$ we have $t^A \le T^{A}$, $t > T \ge 1$, and
the conclusion is trivial. Choosing $T_A$ such that for $T\ge T_A$ one has
\begin{equation*}
\int_{2T}^\infty t^Ae^{-t}\,dt\le \frac12\int_{T}^\infty t^Ae^{-t}\,dt,
\end{equation*}
we can write
\begin{equation*}
\int_T^\infty t^Ae^{-t}\,dt\le \int_T^{2T} t^Ae^{-t}\,dt+\int_{2T}^\infty t^Ae^{-t}\,dt\le
 C\,T^Ae^{-T}+\frac12\int_{T}^\infty t^Ae^{-t}\,dt, \qquad T \ge T_A.
\end{equation*}
This implies \eqref{aa} for $T\ge T_A$ and consequently for all $T\ge1$.
\end{proof}

\begin{lemma}
\label{lem:le2}
Let $A\in\mathbb R$ and $\gamma > 0$ be fixed. Then for $0<T<S\le 2T$ we have
\begin{equation} \label{estea2}
T^A(S-T)\exp(-2\gamma T) \lesssim J_A(\gamma T,\gamma S)\lesssim T^A(S-T)\exp(-\gamma T),
\end{equation}
while for $S>2T>0$ we have $J_A(\gamma T,\gamma S)\simeq I_A(\gamma T)$ when $S\ge2$, and
\begin{equation*}
 J_A(\gamma T,\gamma S)\simeq  
\begin{cases}
 T^{A+1}, & \quad A<-1\\
\log (S\slash T), & \quad A=-1 \\
S^{A+1}, & \quad A>-1
\end{cases}\;\; 
\end{equation*}
when $0<S<2$.
\end{lemma}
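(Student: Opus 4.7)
The plan is to reduce to $\gamma=1$ by the substitution $u=\gamma t$, exactly as in the proof of Lemma~\ref{lem:le1}, and then treat the three regimes of the statement separately.

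\emph{Short interval $0<T<S\le 2T$.} On $[T,S]$ one has $t^A\simeq T^A$ (with constants depending only on $A$, since $1\le t/T\le 2$) together with $e^{-2T}\le e^{-t}\le e^{-T}$. Multiplying these pointwise comparisons and integrating over an interval of length $S-T$ gives \eqref{estea2} directly.

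\emph{Long interval with $S\ge 2$.} Here the aim is to show $J_A(T,S)\simeq I_A(T)$. The bound $J_A\le I_A$ is automatic, so only the lower bound requires work, and I would split by the size of $T$. If $T\ge 1$, Lemma~\ref{lem:le1} yields $I_A(T)\simeq T^Ae^{-T}$; since $S>2T\ge T+1$, the subinterval $[T,T+1]\subset[T,S]$ is available, and a direct estimate of $t^Ae^{-t}$ on it produces $J_A(T,S)\gtrsim T^Ae^{-T}$. If $T<1\le 2\le S$, I would decompose $J_A(T,S)=\int_T^1 t^Ae^{-t}\,dt+\int_1^S t^Ae^{-t}\,dt$. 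The second piece is bounded below by the positive constant $\int_1^2 t^Ae^{-t}\,dt$. On $[T,1]$ the factor $e^{-t}$ is comparable to $1$, so the first piece is comparable to $\int_T^1 t^A\,dt$; matching this with the three cases of $I_A(T)$ supplied by Lemma~\ref{lem:le1} (namely $T^{A+1}$, $\log(2/T)$, or $1$) completes the comparison.

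\emph{Long interval with $0<S<2$.} Now $0<T<S/2<1$, so $e^{-t}\simeq 1$ uniformly on $[T,S]$ and therefore $J_A(T,S)\simeq \int_T^S t^A\,dt$, which equals $(S^{A+1}-T^{A+1})/(A+1)$ for $A\ne -1$ and $\log(S/T)$ for $A=-1$. The hypothesis $T<S/2$ forces $T^{A+1}$ and $S^{A+1}$ to differ by a factor of at least $2^{|A+1|}$, so one of them dominates the other up to a constant depending only on $A$: this yields $J_A\simeq S^{A+1}$ when $A>-1$ and $J_A\simeq T^{A+1}$ when $A<-1$, matching the claim.

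The argument is essentially a careful case analysis, and the only mild subtlety is the intermediate regime $T<1\le S$ inside the second case, where one must combine the three asymptotic forms from Lemma~\ref{lem:le1} with the explicit primitive of $t^A$ on $[T,1]$ to verify $J_A(T,S)\simeq I_A(T)$ in each range of $A$; everything else is bookkeeping.
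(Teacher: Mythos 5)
Your proposal is correct and follows essentially the same route as the paper: reduce to $\gamma=1$, handle the short interval $T<S\le 2T$ by freezing $t^A\simeq T^A$ and bounding $\int_T^S e^{-t}\,dt$, and handle $S>2T$ by splitting on whether $T\ge 1$, $T<1\le S$, or $S<2$. The only cosmetic differences are that the paper takes the lower-bound subinterval to be $[T,2T]$ rather than $[T,T+1]$ when $T\ge1$, and in the regime $T<1\le 2\le S$ it simply notes $J_A(T,S)\gtrsim\int_T^2 t^A\,dt\gtrsim I_A(T)$ rather than splitting at $t=1$ and adding a constant — both are trivial variants of the same bookkeeping.
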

\begin{proof}
As in the proof of Lemma \ref{lem:le1}, it is enough to deal with the case $\gamma =1$.
The bounds for $T<S\le 2T$ follow since then $\int_T^St^Ae^{-t}\,dt\simeq T^A\int_T^Se^{-t}\,dt$ and
$$
(S-T)e^{-2T}\le \int_T^Se^{-t}\,dt\le (S-T)e^{-T}. 
$$

Assume now that $S > 2T$. Clearly, $J_A(T,S) < I_A(T)$. On the other hand, if $T \ge 1$ then 
$$
J_A(T,S) > \int_T^{2T} t^A e^{-t}\, dt \gtrsim T^A \int_T^{2T} e^{-t}\, dt \gtrsim T^A e^{-T}
\gtrsim I_A(T),
$$
the last estimate being a consequence of \eqref{estea}. When $0< T < 1$, we distinguish two subcases.
If $S \ge 2$, then again $J_A(T,S) \gtrsim \int_T^2 t^A\, dt \gtrsim I_A(T)$.
If $2T<S<2$, then $J_A(T,S) \simeq \int_T^S t^A \, dt$, and evaluating the last integral we arrive
at the claimed bounds for $J_A(T,S)$.
\end{proof}

We note that \eqref{estea} and \eqref{estea2} may be written slightly less precisely as 
\begin{align*}
I_A(\gamma T) & \simeq\simeq \exp(-cT), \qquad T \ge 1, \\
J_A(\gamma T,\gamma S) & \simeq\simeq T^A (S-T)\exp(-cT), \qquad 0<T<S\le 2T,
\end{align*}
respectively. This fact will be used in the sequel without further mention.

We now apply Lemmas \ref{lem:le1} and \ref{lem:le2} to prove qualitatively sharp estimates of the integral
\begin{equation*}
 E_A(T,S)=\int_0^1t^A\exp\big(-Tt^{-1}-St\big)\,dt, \qquad 0<T,S<\infty. 
\end{equation*}
The following result provides, in particular, a refinement and generalization of \cite[Lemma 2.4]{NS2}.
\begin{lemma} \label{pro:comp}
Let $A \in \mathbb{R}$ be fixed. Then
\begin{equation*}
E_A(T,S)\simeq\simeq\exp\Big(-c\sqrt{T(T\vee S)}\Big)\times \begin{cases}
T^{A+1}, & \quad A<-1 \\
1+\log^+\frac1{T(T\vee S)}, & \quad A=-1 \\
(S\vee 1)^{-A-1}, & \quad A>-1
\end{cases}\;\; ,
\end{equation*}
uniformly in $T,S>0$.
\end{lemma}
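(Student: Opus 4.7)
The plan is to reduce $E_A(T,S)$ to integrals of the form handled by Lemmas \ref{lem:le1} and \ref{lem:le2} via the substitution $u = T/t$, which gives
\[
E_A(T,S) = T^{A+1}\int_T^\infty u^{-A-2}\exp(-u - ST/u)\, du.
\]
The exponent $\varphi(u) = u + ST/u$ attains its unconstrained minimum $2\sqrt{TS}$ at $u_{*} = \sqrt{TS}$; on the integration range $[T,\infty)$ its minimum equals $T+S\simeq T$ when $T\ge S$ and $2\sqrt{TS}$ when $T<S$. The proof splits naturally into these two cases.

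\emph{Case $T\ge S$.} Here $u_{*}$ lies outside $[T,\infty)$, $\varphi$ is increasing on $[T,\infty)$, and $\exp(-ST/u)\in[\exp(-S),1]$ for $u\ge T$. Since $S\le T$, the multiplicative fluctuation from this factor is absorbed into the target exponential $\exp(-c\sqrt{T(T\vee S)})=\exp(-cT)$ via the $\simeq\simeq$ convention, reducing matters to
\[
E_A(T,S)\simeq\simeq \exp(-cT)\, T^{A+1}\, I_{-A-2}(T).
\]
Lemma \ref{lem:le1} applied to $I_{-A-2}(T)$ then yields the three subcases of the polynomial factor (including the borderline logarithm when $A=-1$); for $T\ge 1$ any residual polynomial in $T$ is absorbed into the exponential via $\simeq\simeq$.

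\emph{Case $T<S$.} Now $u_{*}\in(T,\infty)$ is an interior saddle, and the dominant contribution comes from a neighborhood of $u_{*}$, where $\varphi\simeq 2\sqrt{TS}$. One splits the integration at $u_{*}/2$, $2u_{*}$ and analyzes each piece separately. On $[u_{*}/2,2u_{*}]$ both $u$ and $ST/u$ are of order $u_{*}=\sqrt{TS}$, so the contribution is $\simeq u_{*}^{-A-1}\exp(-c\sqrt{TS})$. On $[T,u_{*}/2]$ the bound $\exp(-ST/u)\le\exp(-\sqrt{TS})$ (valid for $u\le u_{*}$) reduces the piece to $\exp(-\sqrt{TS})\,I_{-A-2}(T)$, which via the small-$T$ case of Lemma \ref{lem:le1} produces the logarithmic factor when $A=-1$ and $TS<1$. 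The tail $[2u_{*},\infty)$ is handled by $\exp(-u-ST/u)\le\exp(-u)$ together with Lemma \ref{lem:le1}, and is subdominant. Assembling these gives
\[
E_A(T,S)\simeq\simeq T^{A+1}\,u_{*}^{-A-1}\,\exp(-c\sqrt{TS})
\]
(with an extra log correction when $A=-1$ and $TS<1$), which is then rewritten in the claimed form by examining the subregimes $S\ge 1$ versus $S<1$ and $\sqrt{TS}\ge 1$ versus $\sqrt{TS}<1$, absorbing polynomial-in-$\sqrt{TS}$ discrepancies into the exponential via $\simeq\simeq$.

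\emph{Main obstacle.} The individual reductions to the two lemmas are routine; the real work lies in systematic bookkeeping across the many subregimes of $(T,S,A)$ --- small versus large $T$, small versus large $S$, bounded versus unbounded $\sqrt{TS}$, and the three ranges of $A$, with particular care for the borderline $A=-1$ where the logarithmic factor emerges from the corresponding borderline in Lemma \ref{lem:le1}. Verifying that all the direct estimates combine into the single unified formula of the lemma, using $\simeq\simeq$ to absorb polynomial corrections into the exponential in the large-parameter regime while matching polynomial factors exactly in the bounded-parameter regime, is where the bulk of the technical effort goes.
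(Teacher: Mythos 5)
The plan (substitute $u=T/t$, split around the saddle $u_*=\sqrt{TS}$, invoke Lemmas \ref{lem:le1} and \ref{lem:le2}) is close in spirit to the paper's proof, which instead symmetrizes by $t=u\sqrt{T/S}$ and splits at the saddle, arriving at the exact expression $T^{A+1}I_{-A-2}(c\sqrt{T(T\vee S)})+\chi_{\{S>2T\}}S^{-A-1}J_{A}(c\sqrt{TS},cS)$. Your $T\ge S$ case is fine. But there is a genuine error in the "assembled" formula for $T<S$.

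You assert that the three pieces combine to
\[
E_A(T,S)\simeq\simeq T^{A+1}u_*^{-A-1}\exp(-c\sqrt{TS})=(T/S)^{(A+1)/2}\exp(-c\sqrt{TS}),
\]
with a log correction only when $A=-1$, $TS<1$. This is false for $A>-1$ in the regime $TS<1$. Take, e.g., $A>-1$, $S\le 1$, $T\to 0$: then $E_A(T,S)\to\int_0^1 t^A\,dt=\tfrac1{A+1}$, so $E_A(T,S)\simeq1$, matching the claimed $(S\vee1)^{-A-1}\exp(-c\sqrt{TS})\simeq 1$, whereas your formula gives $(T/S)^{(A+1)/2}\to 0$. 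The discrepancy factor $(TS)^{-(A+1)/2}$ is unbounded and cannot be absorbed into $\exp(-c\sqrt{TS})$ since $\sqrt{TS}<1$ makes that factor $\simeq1$. The missing mass comes precisely from the piece $[T,u_*/2]$: after the substitution $v=ST/u$ it equals $S^{-A-1}J_A(2\sqrt{TS},S)$ (up to constants), and for $A>-1$, $TS<1$, this is $\simeq (S\vee1)^{-A-1}$ --- the dominant term --- not a mere log correction confined to $A=-1$. In short, you have mis-identified the dominant piece: near the saddle you only get the $(T/S)^{(A+1)/2}$ contribution, while the correct polynomial factor $(S\vee1)^{-A-1}$ arises from the boundary range $[T,u_*/2]$ via the $J_A$ integral, exactly as in the paper's decomposition into $\mathcal J_1+\mathcal J_2$. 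The proposal needs the assembly step redone to retain $S^{-A-1}J_A(c\sqrt{TS},cS)$ alongside $T^{A+1}I_{-A-2}(c\sqrt{TS})$.
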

\begin{proof}
We first estimate $E_A(T,S)$ in terms of the integrals $I_A$ and $J_A$.
For $0<S\le 2T$ we have
\begin{equation*}
E_A(T,S)\simeq\simeq \int_0^1 t^A\exp(-cTt^{-1})\,dt
	\simeq T^{A+1}\int_{cT}^\infty u^{-A-2}e^{-u}\,du=T^{A+1}I_{-A-2}(cT),
\end{equation*}
where the second relation follows by the change of variable $t=cT\slash u$.
When $S>2T$ we change the variable $t=u\sqrt{T\slash S}$ and get
$$
E_A(T,S)=\Big(\frac TS\Big)^{(A+1)\slash2}
\int_0^{\sqrt{S\slash T}}u^A\exp\big(-\sqrt{TS}(u+u^{-1})\big)\,du\equiv \mathcal J_1+\mathcal J_2,
$$
where $\mathcal J_1$ and $\mathcal J_2$ come from splitting the integration over 
the intervals $(0,1)$ and $(1,\sqrt{S/T})$, respectively. Then
\begin{align*}
\mathcal J_1\simeq\simeq \Big(\frac TS\Big)^{(A+1)\slash2}
\int_0^1 u^A\exp\big(-c\sqrt{TS}u^{-1}\big)\,du&\simeq T^{A+1}
\int_{c\sqrt{TS}}^\infty z^{-A-2}e^{-z}\,dz \\
& =T^{A+1}I_{-A-2}\big(c\sqrt{TS}\big)
\end{align*}
and
\begin{align*}
\mathcal J_2\simeq\simeq \Big(\frac TS\Big)^{(A+1)\slash2}
\int_1^{\sqrt{S\slash T}} u^A\exp\big(-c\sqrt{TS}u\big)\,du&
\simeq S^{-A-1}\int_{c\sqrt{TS}}^{cS} z^{A}e^{-z}\,dz\\
&=S^{-A-1}J_{A}\big(c\sqrt{TS},cS\big).
\end{align*}
Summing up, we have
\begin{equation*}
E_A(T,S)\simeq\simeq T^{A+1}I_{-A-2}\big(c\sqrt{T(T\vee S)}\big)
	+ \chi_{\{S>2T\}}S^{-A-1}J_{A}\big(c\sqrt{TS},cS\big),
\end{equation*}
uniformly in $S,T>0$. In the next step we describe the behavior of the two terms here by means 
of Lemmas \ref{lem:le1} and \ref{lem:le2}.

From Lemma \ref{lem:le1} it follows that
$$
T^{A+1}I_{-A-2}\big(c\sqrt{T(T\vee S)}\big) \simeq\simeq 
	T^{A+1}\exp\big(-c\sqrt{T(T\vee S)}\big), \qquad T(T\vee S)\ge1,
$$
(here, and also in analogous places below, $c$ on the left-hand side should be understood 
as a \textit{given} constant) and
\begin{equation*}
T^{A+1}I_{-A-2}\big(c\sqrt{T(T\vee S)}\big)\simeq
\begin{cases}
T^{A+1}, & \quad A<-1 \\
\log (\frac4{T(T\vee S)}), & \quad A=-1 \\ 
\big(\frac T{T\vee S}\big)^{(A+1)\slash2}, & \quad A>-1
\end{cases} \;\; , \qquad T(T\vee S)\le1.
\end{equation*}
The term $S^{-A-1}J_{A}(c\sqrt{TS},cS)$ comes into play when $S>2T$, and in this case 
we use Lemma \ref{lem:le2} to write the bounds
$$
S^{-A-1}J_{A}\big(c\sqrt{TS},cS\big) \simeq \chi_{\{S\ge 2\}}\Phi_1+\chi_{\{S<2\}}\Phi_2,
$$
where
$$
\Phi_1 = S^{-A-1}I_{A}\big(c\sqrt{TS}\big), \qquad \qquad
\Phi_2=
\begin{cases}
(T\slash S)^{(A+1)\slash2}, & \quad A<-1 \\
\log (\frac ST), & \quad A=-1 \\ 
1, & \quad A>-1
\end{cases}\;\; .
$$
By Lemma \ref{lem:le1},
\begin{align*}
\Phi_1 & \simeq\simeq S^{-A-1}\exp\big(-c\sqrt{TS}\big), \qquad TS\ge1, \\
\Phi_1 & \simeq 
\begin{cases}
(T\slash S)^{(A+1)\slash2}, & \quad A<-1\\
\log (\frac 4{TS}), & \quad A=-1\\ 
S^{-A-1}, & \quad A>-1
\end{cases}\;\; , \qquad TS\le1.
\end{align*}
To proceed, it is convenient to consider each of the cases  $A<-1$, $A=-1$, and $A>-1$ separately. 

If $A<-1$, then
\begin{align*}
E_{A}(T,S) & \simeq\simeq \chi_{\{2>S> 2T\}} \Big(\frac TS\Big)^{(A+1)\slash2} +
\begin{cases}
T^{A+1}\exp\big(-c\sqrt{T(T\vee S)}\big), &\quad T(T\vee S)\ge1\\
T^{A+1}, &\quad T(T\vee S)<1\\ 
\end{cases} \\
& \qquad
+\chi_{\{S>2T\}} \chi_{\{S\ge 2\}}
\begin{cases}
T^{A+1}\exp\big(-c\sqrt{TS}\big), &\quad TS\ge1\\
\big(\frac TS\big)^{(A+1)\slash2}, &\quad TS<1
\end{cases}\;\; .
\end{align*}
Here the first and third terms are insignificant in comparison to the second one.
In case of the third summand, this is because $A<-1$ and
$\big(\frac TS\big)^{(A+1)\slash2}<T^{A+1}$ for $TS<1$. 
A similar argument is used for the first one.
The required estimates of $E_{A}(T,S)$ follow. 
 
If $A=-1$, then
\begin{align*}
E_{-1}(T,S) & \simeq\simeq \chi_{\{2>S> 2T\}} \log \frac ST +
\begin{cases}
\exp\big(-c\sqrt{T(T\vee S)}\big), &\quad T(T\vee S)\ge1\\
\log \big(\frac4{T(T\vee S)}\big), &\quad T(T\vee S)<1
\end{cases} \\
& \qquad +\chi_{\{S>2T\}} \chi_{\{S\ge 2\}}
\begin{cases}
\exp\big(-c\sqrt{TS}\big), &\quad TS\ge1\\
\log \big(\frac4{TS}\big), &\quad TS<1\\ 
\end{cases}\;\; .
\end{align*}
Similarly as in the case of $A<-1$, here also the first and third terms are insignificant 
in comparison to the second one. This is clear for the third summand, and
for the first one this is because $\log \frac ST < \log (\frac4{TS})$ when $S<2$. 
Thus the desired bounds of $E_{-1}(T,S)$ also follow. 
  
Finally, we consider the case $A>-1$, which is less direct than the previous two.
We have
\begin{align*}
E_{A}(T,S) & \simeq\simeq \chi_{\{2>S> 2T\}} + 
\begin{cases}
T^{A+1}\exp\big(-c\sqrt{T(T\vee S)}\big), &\quad T(T\vee S)\ge1\\
 \big(\frac T{T\vee S}\big)^{(A+1)\slash2} , &\quad T(T\vee S)<1\\ 
\end{cases} \\
& \qquad +\chi_{\{S>2T\}} \chi_{\{S\ge 2\}} 
\begin{cases}
T^{A+1}\exp\big(-c\sqrt{TS}\big), &\quad TS\ge1\\
S^{-A-1}, &\quad TS<1\\ 
\end{cases}\;\; .
\end{align*}
Observe that here the relation $\simeq \simeq$ remains valid if the sum of the first and the third
terms is replaced by the comparable (in the sense of $\simeq$) expression 
\begin{equation*}
\chi_{\{S>2T\}}
\begin{cases}
T^{A+1}\exp\big(-c\sqrt{TS}\big), &\quad TS\ge1\\
 (S\vee 1)^{-A-1} , &\quad TS<1\\ 
\end{cases} \;\; .
\end{equation*}
Taking into account that $T^{A+1}\exp(-c\sqrt{TS})\simeq\simeq S^{-A-1}\exp(-c\sqrt{TS})$ for
$TS\ge1$,
we conclude that 
\begin{align*}
E_{A}(T,S) & \simeq\simeq 
\begin{cases}
(T\vee S)^{-A-1}\exp\big(-c\sqrt{T(T\vee S)}\big), &\quad T(T\vee S)\ge1\\
 \big(\frac T{T\vee S}\big)^{(A+1)\slash2} , &\quad T(T\vee S)<1\\ 
\end{cases} \\
& \qquad + \chi_{\{S>2T\}}
\begin{cases}
S^{-A-1}\exp\big(-c\sqrt{TS}\big), &\quad TS\ge1\\
(S\vee 1)^{-A-1}, &\quad TS<1 
\end{cases}\;\; .
\end{align*}
Now, if $T\ge S$ and $T(T\vee S)=T^2<1$, then $(\frac T{T\vee S}\big)^{1\slash2}=1\simeq 1\slash (S\vee 1)$,
while for $T < S$ and $T(T\vee S)=TS<1$, we have 
$(\frac T{T\vee S}\big)^{1\slash2}=(\frac TS\big)^{1\slash2} < 1\slash (S\vee 1)$. Therefore,
\begin{equation*}
E_{A}(T,S)\simeq\simeq 
\begin{cases}
(T\vee S)^{-A-1}\exp\big(-c\sqrt{T(T\vee S)}\big), &\quad T(T\vee S)\ge1\\
 (S\vee 1)^{-A-1},  &\quad T(T\vee S)<1 
\end{cases}\;\; .
\end{equation*}
We claim that this implies
\begin{equation*}
E_{A}(T,S)\simeq\simeq (S\vee 1)^{-A-1}\exp\big(-c\sqrt{T(T\vee S)}\big), 
\end{equation*}
which are precisely the required estimates.

To justify the claim, it is enough to recall that $A>-1$ and observe that if $T\ge S$ and 
$T(T\vee S) = T^2 \ge 1$, then 
\begin{align*}
(T\vee S)^{-A-1}\exp\big(-c\sqrt{T(T\vee S)}\big)=T^{-A-1}\exp(-cT)&\simeq (T\vee 1)^{-A-1}\exp(-cT)\\
&\simeq\simeq (S\vee 1)^{-A-1}\exp(-cT),
\end{align*}
while if $T < S$ and $T(T\vee S)=TS\ge 1$ (this forces $S>1$), then 
$$
(T\vee S)^{-A-1}\exp\big(-c\sqrt{T(T\vee S)}\big)=
S^{-A-1}\exp\big(-c\sqrt{TS}\big)\simeq (S\vee 1)^{-A-1}\exp\big(-c\sqrt{TS}\big).
$$

The proof is finished.
\end{proof}

We are now in a position to prove qualitatively sharp estimates of the potential kernel.
\begin{theor}\label{main1}
For $\sigma>0$ we have
\begin{equation*}
\mathcal K^\sigma(x,y)\simeq\simeq\exp\big(-c\|x-y\|(\|x\|+\|y\|)\big)\times 
\begin{cases}
\|x-y\|^{2\sigma-d}, &\quad \sigma<d\slash2 \\
1+\log^+\frac1{\|x-y\|(\|x\|+\|y\|)}, &\quad \sigma=d\slash2 \\
(1+\|x+y\|)^{d-2\sigma}, &\quad\sigma>d\slash2
\end{cases}\;\; ,
\end{equation*}
uniformly in $x,y\in\mathbb R^d$.
\end{theor}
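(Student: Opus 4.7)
The plan is to insert the explicit formula for $G_t(x,y)$ into \eqref{ker}, split the time integral at $t=1$, writing $\mathcal{K}^\sigma = \mathcal{K}_0^\sigma + \mathcal{K}_\infty^\sigma$, and reduce the small-$t$ piece to the integral $E_A(T,S)$ of Lemma~\ref{pro:comp}. On $(0,1)$ one has the uniform equivalences $\sinh(2t)\simeq t$, $\tanh(t)\simeq t$, and $\coth(t)\simeq 1/t$, so, after pulling out $(2\pi\sinh(2t))^{-d/2}\simeq t^{-d/2}$, up to the relation $\simeq\simeq$,
$$
\mathcal{K}_0^\sigma(x,y)\simeq\simeq \int_0^1 t^{\sigma-d/2-1}\exp\!\Big(-c_1 t^{-1}\|x-y\|^2 - c_2 t\,\|x+y\|^2\Big)\,dt,
$$
which (after a trivial rescaling absorbed by $\simeq\simeq$) is of the form $E_A(T,S)$ with $A=\sigma-d/2-1$, $T\simeq\|x-y\|^2$, and $S\simeq\|x+y\|^2$.

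The second step is to translate the three regimes of Lemma~\ref{pro:comp} back to the variables $x,y$. The key elementary observation is the equivalence $\|x-y\|\vee\|x+y\|\simeq\|x\|+\|y\|$ (immediate from the parallelogram identity and the triangle inequality), which gives $\sqrt{T(T\vee S)}\simeq\|x-y\|(\|x\|+\|y\|)$, producing exactly the exponential factor of the theorem. The polynomial/logarithmic prefactors then align on the nose: $T^{A+1}\simeq\|x-y\|^{2\sigma-d}$ handles $\sigma<d/2$; $1+\log^+\frac{1}{T(T\vee S)}\simeq 1+\log^+\frac{1}{\|x-y\|(\|x\|+\|y\|)}$ (the factor $2$ being absorbed by $\log^+$ and an additive constant) handles $\sigma=d/2$; and $(S\vee 1)^{-A-1}\simeq(1+\|x+y\|)^{d-2\sigma}$, using $S\vee 1\simeq(1+\|x+y\|)^2$, handles $\sigma>d/2$.

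It then remains to show that $\mathcal{K}_\infty^\sigma$ does not spoil the estimate. On $[1,\infty)$ the functions $\tanh(t)$ and $\coth(t)$ are bounded above and below by positive constants and $\sinh(2t)\simeq e^{2t}$, so the identity $\|x+y\|^2+\|x-y\|^2=2(\|x\|^2+\|y\|^2)$ yields $\mathcal{K}_\infty^\sigma(x,y)\simeq\simeq\exp(-c(\|x\|^2+\|y\|^2))$. Since $\|x-y\|(\|x\|+\|y\|)\le 2(\|x\|^2+\|y\|^2)$, after extracting $\exp(-c'\|x-y\|(\|x\|+\|y\|))$ for sufficiently small $c'$ there remains a Gaussian surplus $\exp(-c''(\|x\|^2+\|y\|^2))$; invoking $\|x\|^2+\|y\|^2\ge\|x-y\|^2/2$, this surplus easily dominates the polynomial or logarithmic prefactor of $\mathcal{K}_0^\sigma$ in each regime, so the tail can be absorbed into the main term without altering the bounds. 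I expect this last absorption, together with the careful bookkeeping of constants in the exponential when reducing $G_t$ to the $E_A$ form, to be the main technical obstacle; the rest of the argument is a direct application of Lemmas~\ref{lem:le1}--\ref{pro:comp}.
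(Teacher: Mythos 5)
Your proposal is correct and follows essentially the same route as the paper: split the time integral at $t=1$, reduce the small-$t$ piece to $E_{\sigma-d/2-1}(c\|x-y\|^2,c\|x+y\|^2)$ via the asymptotics of $\tanh,\coth,\sinh$ near $0$, invoke Lemma~\ref{pro:comp}, and absorb the large-$t$ Gaussian tail. The only difference is that you spell out the translation $\sqrt{T(T\vee S)}\simeq\|x-y\|(\|x\|+\|y\|)$ and the prefactor identifications explicitly, whereas the paper leaves these to the reader.
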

\begin{proof}
We decompose
$$
\Gamma(\sigma)\mathcal K^\sigma(x,y)=\int_0^1G_t(x,y)\,t^{\sigma-1}\,dt+
\int_1^\infty G_t(x,y)\,t^{\sigma-1}\,dt\equiv \mathcal J^\sigma_0(x,y)+\mathcal J^\sigma_\infty(x,y).
$$
For $0<t<1$ we have $\tanh t \simeq t$, $\coth t \simeq t^{-1}$, $\sinh 2t \simeq t$, and therefore 
$$
\mathcal J^\sigma_0(x,y)\simeq\simeq E_{\sigma-d\slash2-1}\big(c\|x-y\|^2,c\|x+y\|^2\big). 
$$
This combined with Lemma \ref{pro:comp} shows that the estimates from the statement hold with
$\mathcal{K}^{\sigma}(x,y)$ replaced by $\mathcal{J}^{\sigma}_0(x,y)$. 
Further, taking into account that $\tanh t \simeq 1 \simeq \coth t$ for $t > 1$, we see that
$$
\mathcal J^\sigma_\infty(x,y) \simeq\simeq \exp\big(-c(\|x\|^2+\|y\|^2)\big).
$$
Thus $\mathcal J^\sigma_0(x,y)$ dominates $\mathcal J^\sigma_\infty(x,y)$ 
in the above decomposition, in the sense that
$$
\mathcal J^\sigma_\infty(x,y) \lesssim E_{\sigma-d\slash2-1}\big(c\|x-y\|^2,c\|x+y\|^2\big)
$$
for a sufficiently small constant $c>0$. The conclusion follows.
\end{proof}

\section{Sharpness of the $L^p$-$L^q$ boundedness of the potential operator} \label{sec:sharp}
Given $0<\sigma<d/2$, define the region
\begin{align*}
R & =\bigg\{\Big(\frac1p,\frac1q\Big)\colon 0\le\frac1p\le1\,\,\,{\rm and}\,\,\, 0
\vee \Big(\frac1p-\frac{2\sigma}d\Big)\le\frac1q\le 1 \wedge\Big(\frac1p+\frac{2\sigma}d\Big)\bigg\} \\
& \qquad \Big\backslash 
\bigg(\bigg\{\Big(\frac1p,\frac1q\Big)\colon 0\le\frac1p\le1-\frac{2\sigma}d\,\,\,{\rm and}\,\,\, 
 \frac1q = \frac1p+\frac{2\sigma}d\bigg\} \cup 
 	\bigg\{\Big(\frac{2\sigma}d,0\Big),\Big(1,1-\frac{2\sigma}d\Big)\bigg\}\bigg)
\end{align*}
contained in the unit $(\frac1p,\frac1q)$-square $[0,1]^2$, see Figure \ref{fig1}. 

\begin{figure}[ht]
\includegraphics[width=0.6\textwidth]{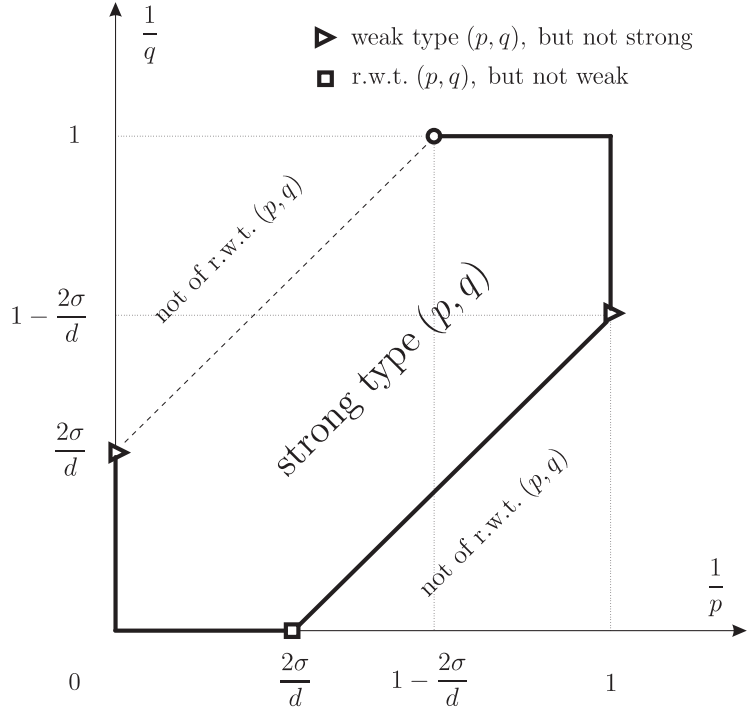}
\caption{Mapping properties of $\mathcal{I}^{\sigma}$ for $0 < \sigma < d/2$.}\label{fig1}
\end{figure}

The following result enhances \cite[Theorem 8]{BT}, see also \cite[Theorem 2.3]{NS3}. 
\begin{theor} \label{thm:LpLq}
Let $d\ge1$, $0<\sigma<d\slash2$ and $1\le p,q\le\infty$.
Then $\mathcal{I}^\sigma\colon L^p(\mathbb R^d)\to L^q(\mathbb R^d)$ boundedly 
if and only if $(\frac 1p,\frac1q)$ lies in the region $R$.

On the other hand, 
$\mathcal{I}^{\sigma}$ is not even of restricted weak type $(p,q)$ when $(\frac{1}p,\frac{1}q)$ is
not in the closure of $R$. Moreover, $\mathcal{I}^{\sigma}$ is of weak type $(p,q)$ for
$(\frac{1}p,\frac{1}q) = (0,\frac{2\sigma}d)$ and $(\frac{1}p,\frac{1}q) = (1,1-\frac{2\sigma}d)$.
For $(\frac{1}p,\frac{1}q) = (\frac{2\sigma}d,0)$ the restricted weak type is true, whereas weak type fails.
\end{theor}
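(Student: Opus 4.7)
The strong-type assertion for $(1/p,1/q)\in R$ is contained in \cite[Theorem~8]{BT}, so the real work is the three endpoint statements together with the sharpness, all of which I would derive from the pointwise bounds on $\mathcal{K}^\sigma$ in Theorem~\ref{main1}. For the positive endpoint statements: restricted weak type at $(2\sigma/d,0)$ follows by combining the upper bound $\mathcal{K}^\sigma(x,y)\lesssim \|x-y\|^{2\sigma-d}$ (a trivial consequence of Theorem~\ref{main1}) with the classical estimate $\int_E\|x-y\|^{2\sigma-d}\,dy\lesssim |E|^{2\sigma/d}$. Weak type at $(1,1-2\sigma/d)$ then follows by self-adjointness of $\mathcal{I}^\sigma$ and Lorentz-space duality. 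Weak type at $(0,2\sigma/d)$ follows from the pointwise bound $\mathcal{I}^\sigma 1(x)\lesssim (1+\|x\|)^{-2\sigma}\in L^{d/(2\sigma),\infty}(\R)$ (computed by splitting the kernel via Theorem~\ref{main1} into near- and far-diagonal parts) combined with $|\mathcal{I}^\sigma f|\le\|f\|_\infty\mathcal{I}^\sigma 1$.

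For the failure of restricted weak type outside $\overline{R}$ I would split into two cases. If $1/q<1/p-2\sigma/d$, test with $E=B(0,r)$ for small $r$: the lower bound in Theorem~\ref{main1} gives $\mathcal{I}^\sigma\chi_E\gtrsim r^{2\sigma}$ on $B(0,r/2)$, so $\|\mathcal{I}^\sigma\chi_E\|_{L^{q,\infty}}\gtrsim r^{2\sigma+d/q}$, which dominates $\|\chi_E\|_{L^{p,1}}\simeq r^{d/p}$ as $r\to 0^+$. If $1/q>1/p+2\sigma/d$ (which forces $q<d/(2\sigma)$), test with $E=B(0,R)$ for large $R$: Theorem~\ref{main1} yields $\mathcal{I}^\sigma\chi_E(x)\gtrsim \|x\|^{-2\sigma}$ on $\{1\le\|x\|\le R/2\}$, hence $\|\mathcal{I}^\sigma\chi_E\|_{L^{q,\infty}}\gtrsim R^{d/q-2\sigma}$, which dominates $\|\chi_E\|_{L^{p,1}}\simeq R^{d/p}$ as $R\to\infty$. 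Failure of strong type at the anti-HLS endpoints $(0,2\sigma/d)$ and $(1-2\sigma/d,1)$ follows from the matching pointwise lower bound $\mathcal{I}^\sigma 1(x)\gtrsim (1+\|x\|)^{-2\sigma}$, which is not in $L^{d/(2\sigma)}(\R)$ (the second endpoint comes out by duality as well).

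The two delicate remaining pieces are the failure of weak type at $(2\sigma/d,0)$ and the failure of strong type on the open anti-HLS segment $\{1/q=1/p+2\sigma/d,\ 0<1/p<1-2\sigma/d\}$. For the first I would take $f(y)=\|y\|^{-2\sigma}(1-\log\|y\|)^{-1}\chi_{B(0,1/2)}(y)$: a polar-coordinate computation together with $\sigma<d/2$ shows $f\in L^{d/(2\sigma)}(\R)$, while the lower bound of Theorem~\ref{main1} applied in the far-shell regime $\|y\|\ge 2\|x\|$ yields $\mathcal{I}^\sigma f(x)\gtrsim \log(-\log\|x\|)\to\infty$ as $x\to 0$, so $\mathcal{I}^\sigma f\notin L^\infty(\R)$ and weak type $(d/(2\sigma),\infty)$—which coincides with strong type—fails.

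For the second piece, a single-ball test gives only a bounded ratio $\|\mathcal{I}^\sigma\chi_{B(0,R)}\|_q\simeq\|\chi_{B(0,R)}\|_p$, so a multi-scale construction is needed. I would take $f=\sum_{j=1}^N 2^{-jd/p}\chi_{A_j}$ with annular shells $A_j=B(0,2^{j+1})\setminus B(0,2^j)$. Using Theorem~\ref{main1}, the exponential factor forces only $y$ with $\|x-y\|\lesssim 2^{-j}$ to contribute significantly when $\|x\|\sim 2^j$, giving $\mathcal{I}^\sigma\chi_{A_j}(x)\simeq 2^{-2\sigma j}$ on a slight enlargement of $A_j$, with cross-scale coupling suppressed exponentially (by the same exponential factor together with $|\|x\|-\|y\||\gtrsim 2^{\max(j,k)}$ for $|j-k|\ge 2$). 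Consequently $\|f\|_p\simeq N^{1/p}$ and $\|\mathcal{I}^\sigma f\|_q\simeq N^{1/q}$, and since $1/q=1/p+2\sigma/d$ forces $q<p$, the ratio $N^{2\sigma/d}$ diverges. This multi-scale construction—with the careful kernel bookkeeping that makes the $\mathcal{I}^\sigma\chi_{A_j}$ essentially disjoint across scales—is the main obstacle; once it is in place, the failure reduces to the elementary $\ell^q\not\subset\ell^p$ when $q<p$.
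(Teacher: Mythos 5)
Your proposal is correct and yields the full theorem, but it departs from the paper's proof in several genuine ways; both routes ultimately rest on Theorem~\ref{main1}, but the logical organization and several of the key constructions differ.

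\emph{Duality direction.} You prove restricted weak type $(d/(2\sigma),\infty)$ directly from the kernel bound $\mathcal{K}^\sigma(x,y)\lesssim\|x-y\|^{2\sigma-d}$ and then dualize to obtain weak type $(1,d/(d-2\sigma))$. The paper goes the other way: it cites \cite[Theorem 2.3]{NS3} for weak type $(1,d/(d-2\sigma))$ and dualizes (via the associate-space machinery of \cite{BS}) to get the restricted weak type at the top-left corner. Your direction requires a short Tonelli/Fubini argument to identify the restriction of the Banach adjoint to $L^1$ with $\mathcal{I}^\sigma$; once noted, this is sound, and it has the advantage of being self-contained rather than citing NS3.

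\emph{Lower triangle.} You disprove restricted weak type via the single scaling test $E=B(0,r)$, $r\to 0^+$, which is a direct counterexample; the paper uses an au-contraire argument through the Stein--Weiss interpolation theorem, reducing to the failure of strong type proved separately with the test function $\chi_{\{\|y\|<1\}}\|y\|^{-2\sigma-d/q}$. Your single-ball test is cleaner and, pleasantly, the same family of tests (small balls for the lower triangle, large balls for the upper) does double duty across both off-diagonal regions, unifying what the paper treats by two different methods. Your upper-triangle test coincides with the paper's ($f_r=\chi_{\{\|y\|<r\}}$, $r$ large).

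\emph{Anti-HLS segment.} Here the constructions differ most. The paper uses a single log-weighted function $f(y)=\chi_{\{\|y\|>e\}}\|y\|^{-d/p}(\log\|y\|)^{-1/p-2\sigma/d}$ and a pointwise lower bound for $\mathcal{I}^\sigma f$. You use a multi-scale sum of annular indicators $f=\sum_{j\le N}2^{-jd/p}\chi_{A_j}$, which reduces the matter to $\ell^q\not\subset\ell^p$ when $q<p$. Both work, but note that your worry about cross-scale coupling is superfluous: because the kernel is nonnegative, you only need the diagonal lower bound $\mathcal{I}^\sigma f\gtrsim 2^{-jd/p-2\sigma j}$ on part of $A_j$, not the upper bound that would require disjointness of the $\mathcal{I}^\sigma\chi_{A_j}$. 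Once the $\simeq$ is relaxed to $\gtrsim$ for $\|\mathcal{I}^\sigma f\|_q$, the ``main obstacle'' you flag disappears.

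\emph{Failure of weak type at $(2\sigma/d,0)$.} Your double-logarithmic test function $\|y\|^{-2\sigma}(1-\log\|y\|)^{-1}\chi_{B(0,1/2)}$ is a new explicit construction; the paper simply invokes the already-known failure of strong type $(d/(2\sigma),\infty)$ (from \cite[Theorem 8]{BT}). This is again a gain in self-containedness at the cost of a few extra lines.

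In summary: no gaps, but a genuinely different (and in places tidier and more self-contained) route --- reversed duality, unified ball counterexamples in place of the paper's interpolation trick, and a multi-scale Knapp-type construction in place of the paper's log-weighted test function.
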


Before giving the proof we take the opportunity to present a short argument showing 
\cite[(21) and (41)]{BT}, the result we will apply in a moment.
\begin{lemma}
\label{lem:le3}
Given $\sigma>0$, 
\begin{equation*} 
\|\mathcal K^\sigma(x,\cdot)\|_1\simeq (1\vee \|x\|)^{-2\sigma}, \qquad x \in \mathbb{R}^d.
\end{equation*}
\end{lemma}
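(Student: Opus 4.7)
The plan is to compute $\|\mathcal K^\sigma(x,\cdot)\|_1$ by swapping the $y$-integral with the $t$-integral in \eqref{ker}, evaluating the inner Gaussian integral in closed form, and then analyzing the resulting one-variable integral asymptotically.

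By \eqref{ker} and Fubini (the integrand is nonnegative),
\[
\Gamma(\sigma)\|\mathcal K^\sigma(x,\cdot)\|_1 = \int_0^\infty t^{\sigma-1}\,\mathcal G_t(x)\,dt, \qquad \mathcal G_t(x) := \int_{\mathbb R^d} G_t(x,y)\,dy.
\]
Substituting the Mehler-type formula for $G_t$ and completing the square in $y$, with $\alpha=\tfrac14\tanh t$, $\beta=\tfrac14\coth t$, one checks the identities $\alpha+\beta=\cosh(2t)/(2\sinh(2t))$ and $4\alpha\beta/(\alpha+\beta)=\tfrac12\tanh(2t)$, and the Gaussian integration in $y$ evaluates cleanly to
\[
\mathcal G_t(x) = (\cosh 2t)^{-d/2}\exp\Big(-\tfrac12\tanh(2t)\,\|x\|^2\Big).
\]

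Next I split the outer integral at $t=1$. For $t\ge 1$ one has $\cosh(2t)\simeq e^{2t}$ and $\tanh(2t)\simeq 1$, so the tail contributes at most a constant multiple of $\exp(-c\|x\|^2)$, which is dominated by $(1\vee\|x\|)^{-2\sigma}$ uniformly in $x$. For $0<t<1$ we have $\cosh(2t)\simeq 1$ and $\tanh(2t)\simeq t$, reducing the remaining piece to a quantity $\simeq\simeq \int_0^1 t^{\sigma-1}\exp(-ct\|x\|^2)\,dt$. The substitution $u=ct\|x\|^2$ recasts this last integral as $(c\|x\|^2)^{-\sigma}\int_0^{c\|x\|^2} u^{\sigma-1} e^{-u}\,du$ when $\|x\|>0$; for $\|x\|\ge 1$ the truncated gamma integral is pinched between two positive constants, yielding $\simeq \|x\|^{-2\sigma}$, while for $\|x\|\le 1$ direct integration gives $\simeq 1$. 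Assembling both ranges produces exactly the claimed bound $(1\vee\|x\|)^{-2\sigma}$, with matching upper and lower estimates since $\mathcal G_t(x)\ge 0$ throughout.

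The only point requiring genuine care is the bookkeeping of the Gaussian integration in $y$, which is nevertheless entirely mechanical; once the closed-form identity for $\mathcal G_t(x)$ is in hand, everything else reduces to elementary asymptotics of hyperbolic functions, and Lemma \ref{lem:le1} could alternatively be invoked for the final one-variable estimate.
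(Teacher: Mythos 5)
Your proposal is correct and follows essentially the same route as the paper: Fubini, closed-form evaluation of $\int_{\mathbb R^d}G_t(x,y)\,dy = (\cosh 2t)^{-d/2}\exp(-\tfrac12\tanh(2t)\|x\|^2)$, splitting the $t$-integral at $t=1$, and elementary hyperbolic asymptotics to reduce the near piece to a truncated Gamma integral. The only cosmetic difference is that you derive the Gaussian identity by completing the square, whereas the paper simply cites it from \cite[Proposition 3.3]{ST2}.
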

\begin{proof}
Using the identity (see \cite[Proposition 3.3]{ST2})
$$
\exp({-t\mathcal H}) \boldsymbol{1}(x)=\int_{\mathbb R^d}G_t(x,y)\,dy= 
(\cosh 2t)^{-d/2}\exp\Big(-\frac12\tanh(2t)\|x\|^2\Big), \qquad x\in\mathbb R^d,
$$
we may write
\begin{align*}
\int_{\mathbb R^d}\mathcal K^\sigma(x,y)\,dy
& = \frac1{\Gamma(\sigma)}\int_0^\infty\int_{\mathbb R^d}G_t(x,y)\,dy\,t^{\sigma-1}\,dt\\
& = \frac{1}{\Gamma(\sigma)} \int_0^{\infty} (\cosh 2t)^{-d/2}\exp\Big(-\frac12\tanh(2t)\|x\|^2\Big)
	t^{\sigma -1}\, dt.
\end{align*}
Here we split the integration to the intervals $(0,1)$ and $(1,\infty)$ and denote the resulting integrals
by $\mathcal{J}_0$ and $\mathcal{J}_{\infty}$, respectively. Then, uniformly in $x\in \mathbb{R}^d$,
$$
\mathcal{J}_0 \simeq \simeq \int_0^{1} \exp\big(-ct\|x\|^2\big) t^{\sigma-1}\, dt
	= \|x\|^{-2\sigma} \int_0^{\|x\|^2} e^{-ct} s^{\sigma-1}\, dt 
	\simeq \|x\|^{-2\sigma} \big( \|x\|^{2\sigma} \wedge 1\big)
$$
and
$$
\mathcal{J}_{\infty} \simeq \simeq \int_1^{\infty} e^{-td} \exp\big(-c\|x\|^2\big) t^{\sigma-1}\, dt
	= C_{d,\sigma} \exp\big(-c\|x\|^2\big).
$$
The conclusion follows.
\end{proof}

\begin{proof}[Proof of Theorem \ref{thm:LpLq}]
We first focus on strong type inequalities. Then, in view of \cite[Theorem 8]{BT}, what remains 
to prove are the following two items.
\begin{itemize}
\item[(a)] $\mathcal{I}^\sigma$ is not $L^p-L^q$ bounded for
$\frac{2\sigma}d<\frac1p<1$ and $0<\frac1q<\frac1p-\frac{2\sigma}d$.
\item[(b)] $\mathcal{I}^\sigma$ is not $L^p-L^q$ bounded for 
	$0<\frac1p<1-\frac{2\sigma}d$ and $\frac1p+\frac{2\sigma}d\le\frac1q<1$.
\end{itemize}
 
To justify (a), we fix $p$ and $q$ satisfying the assumed conditions and define
$$
f(y) = \chi_{\{\|y\|<1\}} \|y\|^{-2\sigma -d/q}.
$$
This function is in $L^p(\mathbb{R}^d)$ since $-(2\sigma + d/q)p+d>0$. However,
$\mathcal{I}^{\sigma}f \notin L^q(\mathbb{R}^d)$. Indeed, considering $x$ such that $\|x\|<1$ and using the
lower bound from Theorem \ref{main1} we get
$$
\mathcal{I}^{\sigma}f(x) \gtrsim \int_{\|y\|<\|x\|/2} \|x-y\|^{2\sigma-d} \|y\|^{-2\sigma-d/q}\, dy
	\gtrsim \|x\|^{2\sigma-d} \int_{\|y\|<\|x\|/2} \|y\|^{-2\sigma-d/q}\, dy = C \|x\|^{-d/q},
$$
and the function $x \mapsto \chi_{\{\|x\|< 1\}} \|x\|^{-d/q}$ does not belong to $L^q(\mathbb{R}^d)$.

Proving (b) we may assume that $(\frac{1}{p},\frac{1}{q})$ lies on the critical segment 
$\frac1q=\frac1p+\frac{2\sigma}d$, $0<\frac1p<1-\frac{2\sigma}d$. The case
when $\frac1q>\frac1p+\frac{2\sigma}d$ is contained below, in the negative result concerning the
restricted weak type estimate.
Define
$$
f(y) = \chi_{\{\|y\|>e\}} \|y\|^{-d/p} \big( \log\|y\|\big)^{-1/p-2\sigma/d}.
$$
We have 
$$
\int_{\mathbb{R}^d} |f(y)|^p\, dy = C_d \int_e^{\infty} r^{-1} (\log r)^{-1-2\sigma p/d}\, dr < \infty,
$$
so $f \in L^p(\mathbb{R}^d)$. We claim that $\mathcal{I}^{\sigma}f \notin L^q(\mathbb{R}^d)$.
Assuming that $\|x\| > 2e$ and using the lower bound from Theorem \ref{main1} we write
\begin{align*}
\mathcal{I}^{\sigma}f(x) & \gtrsim \int_{\|x\|/2 < \|y\| < \|x\|} \|x-y\|^{2\sigma-d}
	\exp\big(-c\|x-y\|(\|x\|+\|y\|)\big) \|y\|^{-d/p} \big(\log\|y\|\big)^{-1/p-2\sigma/d}\, dy \\
& \gtrsim \|x\|^{-d/p} \big(\log\|x\|\big)^{-1/p-2\sigma/d}
	\int_{\|x\|/2 < \|y\| < \|x\|} \|x-y\|^{2\sigma-d} \exp\big(-2c\|x-y\| \|x\|\big)\, dy.
\end{align*}
As we shall see in a moment, the last integral is comparable with $\|x\|^{-2\sigma}$. Thus
$$
\mathcal{I}^{\sigma}f(x) \gtrsim \|x\|^{-d/p-2\sigma} \big(\log\|x\|\big)^{-1/p-2\sigma/d}
	= \|x\|^{-d/q} \big(\log\|x\|\big)^{-1/q}, \qquad \|x\| > 2e,
$$
and the claim follows.

It remains to analyze the last integral, which we denote by $\mathcal{J}$. Changing the variable
$y=x - z/\|x\|$ we get
$$
\mathcal{J} = \|x\|^{-2\sigma} \int_{{D}_x} \|z\|^{2\sigma-d} e^{-2c\|z\|}\, dz,
$$
where the set of integration is 
${D}_x = \{z \in \mathbb{R}^d : \|x\|^2/2 < \|\,x\|x\|-z\| < \|x\|^2\}$.
We now observe that ${D}_x$ contains the ball 
$B_x = \{z \in \mathbb{R}^d : \|\,x\|x\|/4-z\| < \|x\|^2/4\}$. Indeed, if $z \in B_x$ then
$$
\frac{\|x\|^2}2 < \Bigg| \bigg\| \frac{x\|x\|}4-z\bigg\| - \bigg\|\frac{3}4 x\|x\|\bigg\| \Bigg|
\le \big\| \,x\|x\|-z\big\| \le \bigg\| \frac{x\|x\|}4 - z\bigg\| + \bigg\|\frac{3}4 x \|x\|\bigg\| 
< \|x\|^2.
$$
Thus we have
$$
\|x\|^{-2\sigma} \int_{{B}_x} \|z\|^{2\sigma-d} e^{-2c\|z\|}\, dz \le \mathcal{J} \le
\|x\|^{-2\sigma} \int_{\mathbb{R}^d} \|z\|^{2\sigma-d} e^{-2c\|z\|}\, dz.
$$
Clearly, the integral over $\mathbb{R}^d$ here is finite. The integral over $B_x$ depends on $x$ only
through $\|x\|$. Since the balls $B_x$ are increasing in the sense of $\subset$ when $x$ is moved
away from the origin along a fixed line passing through the origin, we see that the integral over
$B_x$ is an increasing function of $\|x\|$, which is positive and finite. We conclude that
$\mathcal{J} \simeq \|x\|^{-2\sigma}$, $\|x\| > 1$, as desired.

We pass to weak type and restricted weak type inequalities.
Consider first the three `corners' of the boundary of $R$ 
from the statement of Theorem \ref{thm:LpLq}.
If $(\frac{1}p,\frac{1}q) = (1,1-\frac{2\sigma}d)$, then the weak type
$(1,\frac{d}{d-2\sigma})$ holds by \cite[Theorem 2.3]{NS3}. Notice that this property can be expressed
in terms of Lorentz spaces by saying that $\mathcal{I}^{\sigma}$ is bounded from $L^1(\mathbb{R}^d)$
to $L^{d/(d-2\sigma),\infty}(\mathbb{R}^d)$. Then $(\mathcal{I}^{\sigma})^*$ (the adjoint operator
in the Banach space sense) maps boundedly 
$(L^{d/(d-2\sigma),\infty}(\mathbb{R}^d))^*$ into $(L^1(\mathbb{R}^d))^* = L^{\infty}(\mathbb{R}^d)$. 
Further, the associate space of $L^{d/(d-2\sigma),\infty}(\mathbb{R}^d)$ in the sense
of \cite[Chapter 1, Definition 2.3]{BS} is $L^{d/(2\sigma),1}(\mathbb{R}^d)$
(cf. \cite[Chapter 4, Theorem 4.7]{BS}), and by
\cite[Chapter 1, Theorem 2.9]{BS} it can be regarded as a subspace of the dual of 
$L^{d/(d-2\sigma),\infty}(\mathbb{R}^d)$. Since $(\mathcal{I}^{\sigma})^* = \mathcal{I}^{\sigma}$
by symmetry of the kernel, we infer that $\mathcal{I}^{\sigma}$ is of restricted weak type
$(\frac{d}{2\sigma},\infty)$. On the other hand, weak type $(\frac{d}{2\sigma},\infty)$ coincides,
by definition, with the strong type, so $\mathcal{I}^{\sigma}$ is not of weak type
$(\frac{d}{2\sigma},\infty)$ in view of the strong type results we already know. This clarifies
the situations related to the `corners' $(1,1-\frac{2\sigma}d)$ and $(\frac{2\sigma}d,0)$.

Taking into account $(\frac{1}p,\frac{1}q) = (0,\frac{2\sigma}d)$, we will show that $\mathcal{I}^{\sigma}$
is of weak type $(\infty, \frac{d}{2\sigma})$. To do that, it is enough to verify the estimate
\begin{equation} \label{w}
\big|\big\{x\in \mathbb{R}^d : |\mathcal{I}^\sigma f(x)|>\lambda\big\}\big|
\lesssim \bigg(\frac{\|f\|_{\infty}}{\lambda}\bigg)^{d\slash(2\sigma)}, 
\qquad \lambda>0,\quad f\in L^\infty(\mathbb{R}^d).
\end{equation}
But this is immediate in view of the bound, see Lemma \ref{lem:le3},
$$
\|\mathcal K^\sigma(x,\cdot)\|_1\le C\|x\|^{-2\sigma}, \qquad x\in\mathbb{R}^d,
$$
since then it follows that $|\mathcal{I}^{\sigma}f(x)| \le C \|x\|^{-2\sigma} \|f\|_{\infty}$
and consequently
$$
\big\{x\in \mathbb{R}^d : |\mathcal I^\sigma f(x)|>\lambda\big\}\subset 
\bigg\{x\in \mathbb{R}^d : \|x\|<\bigg(C\frac{\|f\|_\infty}{\lambda}\bigg)^{1/{2\sigma}}\bigg\}.
$$
This inclusion leads directly to \eqref{w}. 

Finally, we disprove the restricted weak type in the two triangles, see Figure \ref{fig1}.
In the lower triangle we use an \emph{au contraire} argument involving an extension of the Marcinkiewicz
interpolation theorem for Lorentz spaces due to Stein and Weiss \cite[Chapter 4, Theorem 5.5]{BS}.
Indeed, if $\mathcal{I}^{\sigma}$ were of restricted weak type $(p,q)$ for some $p$ and $q$ such that
$\frac{1}q < \frac{1}p - \frac{2\sigma}d$, then by interpolation with a strong type pair satisfying
$\frac{1}q = \frac{1}p-\frac{2\sigma}d$, $p>1$, $q < \infty$, $\mathcal{I}^{\sigma}$ would be of strong
type $(\widetilde{p},\widetilde{q})$ for some $\widetilde{p}$ and $\widetilde{q}$ corresponding to a point
in the lower triangle, a contradiction with $(a)$ above.

To treat the upper triangle, we will give an explicit counterexample.
Let for large $r$
$$
f_{r}(y) = \chi_{\{\|y\|<r\}}.
$$
Clearly, we have $\|f_r\|_{p} \simeq r^{d/p}$.
Estimating as in the proof of (b) above, we get
\begin{align*}
\mathcal{I}^{\sigma}f_r(x) & \gtrsim \int_{\|x\|/2 < \|y\| < \|x\|} \|x-y\|^{2\sigma-d}
	\exp\big(-c\|x-y\|(\|x\|+\|y\|)\big) \chi_{\{\|y\|<r\}} \, dy \\
& \ge \chi_{\{\|x\|<r\}}
	\int_{\|x\|/2 < \|y\| < \|x\|} \|x-y\|^{2\sigma-d} \exp\big(-2c\|x-y\| \|x\|\big)\, dy \\
& \gtrsim \chi_{\{1 < \|x\| < r\}} \|x\|^{-2\sigma},
\end{align*}
uniformly in large $r$ and $x \in \mathbb{R}^d$. Consequently,
$$
\big| \big\{ x \in \mathbb{R}^d : \mathcal{I}^{\sigma}f_r(x) > \lambda \big\}\big| \ge
\big|\big\{ 1 < \|x\| < r : \|x\| < (C\lambda)^{-1/(2\sigma)} \big\}\big|
$$
for some $C>0$ independent of $r$ and $\lambda > 0$.
Taking $\lambda = r^{-2\sigma}$ we conclude that the weak type $(p,q)$ estimate for $\mathcal{I}^{\sigma}$
implies $r^d \lesssim r^{dq/p + 2\sigma q}$. 
This bound, however, fails when $\frac{1}q > \frac{1}p + \frac{2\sigma}d$ and $r\to \infty$. 

The proof is finished.
\end{proof}

For completeness, we remark that in the context of Theorem \ref{thm:LpLq} the question of
weak/restricted weak type $(p,q)$ inequalities related to the segment 
$\frac{1}q = \frac{1}p+\frac{2\sigma}d$, $1 \le q < \frac{2\sigma}d$, is more subtle
and remains open. Considering the case $\sigma>d/2$, the operator $\mathcal{I}^\sigma$ 
is bounded from $L^p(\mathbb R^d)$ to $L^q(\mathbb R^d)$ for every $1\le p,q\le\infty$, 
see \cite[Theorem 2.3]{NS3}. 
The behavior of $\mathcal{I}^{\sigma}$ in the limiting case $\sigma = d/2$ is described by the
theorem below. This result enhances \cite[Theorem 2.3]{NS3} when $\sigma = d/2$.
\begin{theor}
Let $d \ge 1$ and $1 \le p,q \le \infty$. Then $\mathcal{I}^{d/2}$ is bounded from 
$L^p(\mathbb{R}^d)$ to $L^q(\mathbb{R}^d)$ except for $(p,q)=(\infty,1)$ and $(p,q)=(1,\infty)$.
Considering the two singular cases, we have:
\begin{itemize}
\item[(i)] $\mathcal{I}^{d/2}$ is of weak type $(\infty,1)$, but not of strong type $(\infty,1)$;
\item[(ii)] $\mathcal{I}^{d/2}$ is not of restricted weak type $(1,\infty)$.
\end{itemize}
\end{theor}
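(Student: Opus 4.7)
The plan is to establish strong type $L^p\to L^q$ boundedness on the four boundary edges of $[0,1]^2$ in the $(1/p,1/q)$-plane (except at the two excluded corners) and then fill in the interior by Riesz--Thorin interpolation. From Lemma \ref{lem:le3} I have the pointwise bound
$$
\big|\mathcal{I}^{d/2}f(x)\big| \le \big\|\mathcal{K}^{d/2}(x,\cdot)\big\|_1\,\|f\|_\infty \lesssim (1\vee\|x\|)^{-d}\,\|f\|_\infty,
$$
and since $(1\vee\|\cdot\|)^{-d}\in L^q(\mathbb{R}^d)$ precisely for $q>1$, this yields strong type $L^\infty\to L^q$ for every $1<q\le\infty$. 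By kernel symmetry $\mathcal{K}^{d/2}(x,y)=\mathcal{K}^{d/2}(y,x)$ (hence formal self-adjointness of $\mathcal{I}^{d/2}$), duality then gives strong type $L^p\to L^1$ for every $1\le p<\infty$.

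To cover the remaining sides $p=1$ and $q=\infty$, I would verify
$$
M_q := \sup_{x\in\mathbb{R}^d}\big\|\mathcal{K}^{d/2}(x,\cdot)\big\|_q < \infty, \qquad 1\le q<\infty,
$$
using the upper estimate of Theorem \ref{main1} with $\sigma=d/2$. The only singularity of $\mathcal{K}^{d/2}$ is logarithmic at $y=x$, and $(\log(1/r))^q$ is integrable against $r^{d-1}\,dr$ near $0$; the exponential factor provides the required decay away from the diagonal, with uniformity in $x$ obtained via a splitting like $\|y\|\le\|x\|/2$ versus $\|y\|>\|x\|/2$ (with the change of variable $z=(y-x)\|x\|$ in the diagonal region). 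Once $M_q$ is controlled, H\"older's inequality gives $L^p\to L^\infty$ for $p>1$ and Minkowski's integral inequality gives $L^1\to L^q$ for $q<\infty$. Riesz--Thorin interpolation between these four edge families then yields strong type on all of $[1,\infty]^2\setminus\{(1,\infty),(\infty,1)\}$.

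For part (i), the pointwise bound above gives $\{|\mathcal{I}^{d/2}f|>\lambda\}\subset\{x:(1\vee\|x\|)^{-d}>\lambda/(C\|f\|_\infty)\}$, and a direct distribution-function computation of the latter yields measure $\lesssim\|f\|_\infty/\lambda$, proving weak type $(\infty,1)$. Taking $f\equiv 1$ and using the lower estimate in Lemma \ref{lem:le3} gives $\mathcal{I}^{d/2}1(x)\simeq(1\vee\|x\|)^{-d}$, which has a non-integrable tail at infinity, so $\mathcal{I}^{d/2}1\notin L^1(\mathbb{R}^d)$ and strong type $(\infty,1)$ fails.

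For part (ii), I take $E_r=\{y:\|y\|<r\}$ with $r\to 0^+$. Setting $x=0$ in Theorem \ref{main1} forces $\|x-y\|(\|x\|+\|y\|)=\|y\|^2$, so the lower bound there gives $\mathcal{K}^{d/2}(0,y)\gtrsim \log(1/\|y\|^2)$ for $\|y\|<1$, whence
$$
\big\|\mathcal{I}^{d/2}\chi_{E_r}\big\|_\infty \ge \mathcal{I}^{d/2}\chi_{E_r}(0) \gtrsim \int_{\|y\|<r}\log(1/\|y\|^2)\,dy \gtrsim r^d\log(1/r),
$$
while $|E_r|\simeq r^d$. Since $\log(1/r)\to\infty$, the putative restricted weak type $(1,\infty)$ inequality $\|\mathcal{I}^{d/2}\chi_E\|_\infty\lesssim|E|$ is violated. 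The main technical obstacle in the whole argument is verifying $M_q<\infty$ for $1<q<\infty$, where the log singularity near the diagonal must be carefully balanced against the Gaussian-type decay of the kernel elsewhere; everything else is a routine combination of pointwise and $L^q$ kernel bounds, duality via self-adjointness, and interpolation.
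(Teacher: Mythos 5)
Your proposal is correct, and parts (i) and (ii) are essentially identical to the paper's arguments: (i) reuses the weak-type inclusion argument from the proof of the theorem for $\sigma<d/2$ and disproves strong type via $\mathcal{I}^{d/2}\boldsymbol{1}\simeq(1\vee\|x\|)^{-d}\notin L^1$; (ii) tests on $\chi_{\{\|y\|<r\}}$, evaluates near the origin where the kernel's log singularity gives $\|\mathcal{I}^{d/2}\chi_{\{\|y\|<r\}}\|_\infty\gtrsim r^d\log(1/r)$, and lets $r\to 0^+$.

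For the main $L^p$--$L^q$ boundedness, however, your route differs genuinely from the paper's: the paper simply cites \cite[Theorem 2.3]{NS3}, whereas you give a self-contained proof directly from the sharp kernel estimates of Theorem \ref{main1}. Concretely, you cover the side $1/p=0$ (minus the corner $(0,1)$) via $\|\mathcal{K}^{d/2}(x,\cdot)\|_1\simeq(1\vee\|x\|)^{-d}\in L^q$ for $q>1$, the side $1/q=1$ by duality, the remaining two sides via the uniform $L^q$-bound $M_q=\sup_x\|\mathcal{K}^{d/2}(x,\cdot)\|_q<\infty$ for $1\le q<\infty$ together with H\"older and Minkowski, and the interior by Riesz--Thorin. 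The verification of $M_q<\infty$ is the only nontrivial step, and it hinges on an observation you use implicitly but should state: by the triangle inequality $\|x\|+\|y\|\ge\|x-y\|$, so $\|x-y\|(\|x\|+\|y\|)\ge\|x-y\|^2$, which caps the logarithmic singularity at $\log^+(1/\|x-y\|^2)$, integrable to any power; combined with the Gaussian-type decay for $\|x-y\|\ge 1$ and the rescaling $z=(y-x)\|x\|$ when $\|x\|$ is large, uniformity in $x$ follows. Your derivation is arguably more in the spirit of the paper --- it demonstrates that the sharp kernel bounds of Theorem \ref{main1} alone suffice to recover the full mapping diagram --- at the modest cost of having to carry out the Schur-type computation that \cite{NS3} had already done.
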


\begin{proof}
The $L^p$-$L^q$ boundedness is contained in \cite[Theorem 2.3]{NS3}.
To show (i), we observe that the weak type $(\infty,1)$ holds true since the proof of \eqref{w} covers
also the case $\sigma = d/2$. The strong type $(\infty,1)$ fails because 
$\mathcal{I}^{d/2}\boldsymbol{1} \notin L^1(\mathbb{R}^d)$, as easily seen by means of Lemma \ref{lem:le3}.

It remains to verify (ii). For $0< \varepsilon < 1/e$, 
let $f_{\varepsilon}(x) = \chi_{\{\|x\|<\varepsilon\}}$. By the lower bound of Theorem \ref{main1}
it follows that
$$
\mathcal{I}^{d/2}f_{\varepsilon}(x) \gtrsim \int_{\|y\|< \varepsilon}
	\log \frac{1}{\|x-y\|(\|x\|+\|y\|)} \, dy, \qquad \|x\|< 1/e,
$$
uniformly in $\varepsilon < 1/e$. Therefore,
$$
\big\|\mathcal{I}^{d/2}f_{\varepsilon}\big\|_{\infty} \gtrsim \int_{\|y\|<\varepsilon} -\log \|y\|\,dy
	= C_d \int_0^{\varepsilon} -r^{d-1}\log r \, dr \gtrsim \varepsilon^d \log\frac{1}{\varepsilon},
		\qquad 0 < \varepsilon < 1/e,
$$
and we conclude that
$$
\frac{\|\mathcal{I}^{d/2}f_{\varepsilon}\|_{\infty}}{\|f_{\varepsilon}\|_1} \gtrsim 
	\log\frac{1}{\varepsilon}, \qquad 0 < \varepsilon < 1/e.
$$
Letting $\varepsilon \to 0^+$, we see that $\mathcal{I}^{d/2}$ is not of restricted weak type $(1,\infty)$.
\end{proof}


\end{document}